\documentclass [11pt]{article}
 \usepackage{amssymb}
 \usepackage{amsmath,amsthm}
 \usepackage{mathrsfs}
 \usepackage{color}
\usepackage[numbers,sort&compress]{natbib}
 \usepackage{graphicx}
\usepackage{tikz}
 \usepackage[colorlinks=true]{hyperref}

\setlength{\textwidth}{167mm}
\setlength{\textheight}{230mm}
\setlength{\topmargin}{0pt}
\setlength{\headsep}{0pt}
\setlength{\headheight}{0pt}
\setlength{\oddsidemargin}{0pt}
\setlength{\evensidemargin}{0pt}
\setlength{\parskip}{0.5ex plus 0.3ex minus 0.3ex}
\normalsize

\renewcommand{\dfrac}[2]{\lower0.15ex\hbox{\large$\textstyle\frac{#1}{#2}$}}

\newtheorem{theorem}{Theorem}[section]
\newtheorem{lemma}[theorem]{Lemma}

\newtheorem{corollary}[theorem]{Corollary}

\numberwithin{equation}{section}

\begin{document}
\title
{\bf A note on the random triadic process 
\thanks{ The work was partially supported by NSFC(No.12071274).}}

\author
{{Fang Tian$^1$\qquad Yiting Yang$^2$}\\
{\small $^1$Department of Applied Mathematics}\\%
{\small Shanghai University of Finance and Economics, Shanghai, 200433, China} \\
{\small\tt tianf@mail.shufe.edu.cn}\\[1ex]
{\small $^2$School of Mathematical Sciences}\\
{\small Tongji University, Shanghai, 200092, China}\\
{\small\tt ytyang@tongji.edu.cn}
}

\date{}
 \maketitle

\begin{abstract}

For a fixed integer $r\geqslant 3$, let $\mathbb{H}_r(n,p)$
be a random $r$-uniform hypergraph on the vertex set $[n]$,
where each $r$-set is an edge randomly and independently with probability~$p$.
The random $r$-generalized triadic process starts with
a complete bipartite graph $K_{r-2,n-r+2}$ on the same vertex set,
chooses two distinct vertices $x$ and $y$  uniformly at random
 and iteratively adds $\{x,y\}$ as an edge if there is a subset $Z$ 
 with size $r-2$, denoted as $Z=\{z_1,\cdots,z_{r-2}\}$, such that 
 $\{x,z_i\}$ and $\{y,z_i\}$ for $1\leqslant i\leqslant r-2$
 are already edges in the graph and $\{x,y, z_1,\cdots,z_{r-2}\}$ 
 is an edge in $\mathbb{H}_r(n,p)$. The random triadic process 
 is an abbreviation for the random $3$-generalized triadic process. 
 Kor\'{a}ndi et al. proved a sharp threshold probability for 
 the propagation of the random triadic process, that is, 
 if $p= cn^{ - \frac 12}$ for some positive constant $c$, 
 with high probability, the triadic process reaches the complete graph
when $c> \frac 12$ and stops at $O(n^{\frac 32})$ edges when $c< \frac 12$.
In this note, we consider the final size of the random $r$-generalized 
triadic process when $p=o( n^{- \frac 12}\log^{ \alpha(3-r)} n)$
with a constant $\alpha> \frac 12$. We show that 
the generated graph of the process essentially behaves like $\mathbb{G}(n,p)$.
The final number of added edges in the process, with high probability, equals
$ \frac {1}{2}n^{2}p(1\pm o(1))$ provided that $p=\omega(n^{-2})$. 
The results partially complement the ones on the case of $r=3$.
\end{abstract}

\hskip 10pt{\bf Keywords:}\ triadic graph process, random  graph, 
uniform hypergraph, concentration inequalities, threshold.

\hskip 10pt {\bf Mathematics Subject Classifications:}\ 05C80, 05D40

\section{Introduction}

For a fixed integer $r\geqslant 3$, the random $r$-generalized triadic 
process starts with a complete bipartite graph $K_{r-2,n-r+2}$ on the vertex set $[n]$
with one set of the vertex partition denoted as $\{v_1,\cdots,v_{r-2}\}$,
chooses two distinct vertices $x$ and $y$ uniformly at random
 and iteratively adds $\{x,y\}$ as an edge  if there is a subset $Z$ 
 with size $r-2$, denoted as $Z=\{z_1,\cdots,z_{r-2}\}$,
such that $\{x,z_i\}$ and $\{y,z_i\}$ for $1\leqslant i\leqslant r-2$
 are already edges in the graph and $\{x,y, z_1,\cdots,z_{r-2}\}$ 
 is an edge in $\mathbb{H}_r(n,p)$, where  $\mathbb{H}_r(n,p)$ 
 is a random $r$-uniform hypergraph on the same vertex set  
 and each $r$-set is an edge randomly and independently with probability $p$. 
 The random $r$-generalized triadic process can be regarded as 
 a simplistic model of the evolution of a social network
on the vertex set $[n]$, where a reliable friendship between two vertices can
only be formed if they share at least $r-2$ common friends and
the connection between these two vertices eventually occurs  with probability $p$.
In particular, for $r=3$, Kor\'{a}ndi et al.~\cite{korandi16}
firstly referred to the above process as the triadic process,
which is a variant of the triangle-free process~\cite{bohman09}
with a similar nature. 

Bohman~\cite{bohman09}  applied the differential equation
method to discuss the final size of the triangle-free process.
The differential equation method applies a pseudo-random heuristic 
to divine the trajectories of a group of graph parameters that govern 
the evolution of the process. Such heuristics also play a central role 
in our understanding of several other constrained random processes that produce 
interesting combinatorial objects~\cite{bennett15, bohman15,kuhn16}.

In terms of the tools developed by Bohman~\cite{bohman09},
for $r=3$ and $p= cn^{- \frac 12}$ with some positive constant $c$,
Kor\'{a}ndi et al.~\cite{korandi16} employed this method 
to prove a sharp threshold probability for the propagation 
of the random triadic process, that is,
with high probability (\textit{w.h.p.} for short), 
the triadic process reaches the complete graph
when $c> \frac 12$ and  stops at  $O(n^{ \frac 32})$ edges when $c< \frac 12$.
A generalization of the propagation threshold
 in~\cite{korandi16} was obtained by Morrison and
Noel~\cite{morr2021} in the hypergraph bootstrap process.

In this note, for any fixed integer $r\geqslant 3$, we consider 
the final size of the random $r$-generalized triadic process 
when $p=o( n^{- \frac 12}\log^{\alpha(3-r)} n)$ with a constant $\alpha> \frac 12$. 
The differential equation method  is not necessary 
under this condition. The proof essentially consists of
an iterative application of standard measure
concentration inequalities by adapting one part of the analysis 
when $c< \frac 12$  in~\cite{korandi16}. 
Let $u,v\in [n]\backslash\{v_1,\cdots,v_{r-2}\}$ be any two distinct vertices 
and $W\subseteq [n]\backslash\{u,v\}$ be a subset with size $r-2$.
At any step in the process, the $r$-set $\{u,v\}\cup W$ is 
called to be an \textit{open} $(r-2,2)$-walk at the vertex $u$ or the vertex $v$,
denoted as the notation $uWv$, if the subgraph it induces contains
 a complete bipartite graph $K_{2,r-2}$ with vertex partitions $\{u,v\}$
 and  $W$, where at least one of the edges in this $K_{2,r-2}$ is added 
 in the previous step, while the pair $\{u,v\}$ has not yet been considered
 to be  an edge. We call the corresponding $r$-set $\{u,v\}\cup W$ 
 of an open $(r-2,2)$-walk $uWv$ as an sample. An open $(r-2,3)$-walk from the vertex $u$ to 
 the vertex $v$, denoted as $uWw'v$, is obtained by attaching 
 the edge $w'v$ to an open $(r-2,2)$-walk $uWw'$ at the vertex $w'$;
 and an open $(r-2,4)$-walk between the vertex $u$ and the vertex $v$,
 denoted as $uW_1w'W_2v$, is obtained by sticking together two open $(r-2,2)$-walks
$uW_1w'$ and $w'W_2v$ at the vertex $w'$, where $W_1,W_2\subseteq [n]\backslash\{u,v\}$ are two
subsets with size $r-2$ and $w'\in [n]\backslash \{W_1\cup W_2\cup\{u,v\}\}$.

We will adopt the standard Landau notation and similar expressions from~\cite{korandi16}. 
For two positive-valued functions $f, g$ on the variable $n$,
we write  $f\ll g$ to denote $\lim_{n\rightarrow \infty}f(n)/g(n)=0$, $f\sim g$ to denote
$\lim_{n\rightarrow \infty}f(n)/g(n)=1$ and $f\lesssim g$ 
if and only if $\limsup_{n\rightarrow \infty} f(n)/g(n)\leqslant 1$.
For an event $A$ and a random variable $Z$ in an arbitrary 
probability space $(\Omega,\mathcal{F},\mathbb{P})$,
 $\mathbb{P}[A]$ and $\mathbb{E}[Z]$ denote the probability of  $A$
 and the expectation of  $Z$. If $X$ is the sum of $n$ independent variables, 
 each equal to $1$ with probability $p$ and $0$ otherwise,
 then we say that $X$ is a Binomially distributed random 
 variable with parameters $n$ and $p$, or just $X\sim\textbf{Bin}[n,p]$.
As usual, all asymptotics in this note are with respect to $n\to\infty$. 
We say some property holds \textit{w.h.p.} if the probability that it holds tends to $1$.
All logarithms are natural, and the floor and ceiling signs are omitted
whenever they are not crucial. Our main result is Theorem~1.1 in the following.
\begin{theorem}
Let $r\geqslant 3$ be a fixed integer and suppose that $p=o( n^{- \frac 12}\log^{\alpha(3-r)} n)$
with a constant $\alpha> \frac 12$. The final number of added edges 
in the random $r$-generalized triadic process, \textit{w.h.p.}, 
equals $ \frac {1}{2}n^{2}p(1\pm o(1))$  when $p=\omega(n^{-2})$.
\end{theorem}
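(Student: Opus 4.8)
The plan is to bypass the dynamics of the process and to analyse the graph on which it stabilises, which is simply the \emph{closure} of the initial $K_{r-2,n-r+2}$ under the addition rule: since $\mathbb{H}_r(n,p)$ is fixed and the rule ``add $\{x,y\}$ whenever some $r$-set $\{x,y\}\cup Z$ lies in $\mathbb{H}_r(n,p)$ and the corresponding $K_{2,r-2}$ with parts $\{x,y\}$ and $Z$ is present'' is monotone in the current edge set, the order in which pairs are sampled is irrelevant and the final graph $G_\infty$ is a deterministic function of $\mathbb{H}_r(n,p)$ (this is the hypergraph bootstrap closure appearing in~\cite{morr2021}). Call a pair $\{x,y\}$ with $x,y\notin\{v_1,\dots,v_{r-2}\}$ \emph{canonical} if $\{x,y,v_1,\dots,v_{r-2}\}\in\mathbb{H}_r(n,p)$; since the edges $\{x,v_i\}$ and $\{y,v_i\}$ all belong to the starting graph, the witness $Z=\{v_1,\dots,v_{r-2}\}$ forces every canonical pair into $G_\infty$. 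Let $A$ be the set of canonical pairs and $B=E(G_\infty)\setminus E(K_{r-2,n-r+2})\setminus A$. Viewed as a graph on $[n]\setminus\{v_1,\dots,v_{r-2}\}$, $A$ is distributed exactly as $\mathbb{G}(n-r+2,p)$, since the events $\{x,y,v_1,\dots,v_{r-2}\}\in\mathbb{H}_r(n,p)$ are independent over the distinct non-$v$ pairs; in particular $|A|\sim\textbf{Bin}\big[\tbinom{n-r+2}{2},p\big]$ has mean $(1-o(1))\tfrac12n^2p$, which tends to infinity precisely under the hypothesis $p=\omega(n^{-2})$, so a Chernoff bound yields $|A|=(1\pm o(1))\tfrac12n^2p$ \textit{w.h.p.} This already gives the lower bound $|E(G_\infty)|-|E(K_{r-2,n-r+2})|\ge|A|\ge(1-o(1))\tfrac12n^2p$.

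For the matching upper bound, the same Chernoff estimate gives $|A|\le(1+o(1))\tfrac12n^2p$ \textit{w.h.p.}, and the at most $\binom{r-2}{2}=O(1)$ edges inside $\{v_1,\dots,v_{r-2}\}$ are negligible since $n^2p\to\infty$; it therefore remains to show $|B|=o(n^2p)$ \textit{w.h.p.} I would bound $\mathbb{E}|B|$ by a union bound over \emph{closure certificates}: unravelling why a non-$v$ pair $\{x,y\}$ belongs to $B$ produces a finite rooted object, rooted at $\{x,y\}$, whose internal nodes are edges introduced through a non-canonical witness (each carrying the corresponding hyperedge) and whose leaves are canonical pairs or edges of the starting graph, subject to all the hyperedges involved lying in $\mathbb{H}_r(n,p)$. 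Every non-$v$ vertex appearing in such a certificate other than $x,y$ is a common neighbour of the two endpoints of some internal edge, so it comes with two incident certificate edges, hence with at least two more hyperedges; accounting for these costs shows that a certificate on $V$ auxiliary vertices is realised with probability at most of order $p\,(np^{2})^{\Theta(V)}$, while the number of certificates on $V$ auxiliary vertices rooted at a fixed pair is at most $n^{V}$ times a factor depending only on $r$ (and, for $r\ge4$, on a bounded power of $\log n$ enumerating the possible shapes). Because $p=o(n^{-1/2}\log^{\alpha(3-r)}n)$ forces $np^{2}=o(1)$, the resulting geometric series over $V\ge1$ converges and gives $\Pr[\{x,y\}\in B]=O(np^{3})=o(p)$, hence $\mathbb{E}|B|=O(n^3p^3)=o(n^2p)$; Markov's inequality upgrades this to $|B|=o(n^2p)$ \textit{w.h.p.}, and combining the three estimates proves that $|E(G_\infty)|-|E(K_{r-2,n-r+2})|=(1\pm o(1))\tfrac12n^2p$ \textit{w.h.p.}

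The delicate part is the control of $B$: one must show, uniformly over all $\Theta(n^2)$ root pairs and all certificate sizes simultaneously, that the cascade of non-canonical additions is dominated by the $O(n^3p^3)$ edges produced by a single non-canonical closure, so that every extra level costs a genuine factor $np^2=o(1)$. Turning this heuristic into a rigorous argument in which the accumulated failure probabilities sum to $o(1)$ is exactly where the full strength of the hypothesis $p=o(n^{-1/2}\log^{\alpha(3-r)}n)$ with $\alpha>\tfrac12$ is used, the logarithmic slack absorbing the mildly $r$-dependent combinatorial overhead in enumerating certificates and in the attendant concentration inequalities for counts of short open walks. This is the part of the analysis adapted from the sub-threshold regime $c<\tfrac12$ of Kor\'{a}ndi et al.~\cite{korandi16}, where a similar bookkeeping of open $(r-2,2)$-, $(r-2,3)$- and $(r-2,4)$-walks shows that the process generates only $o(n^2p)$ edges beyond those forced by the initial complete bipartite graph.
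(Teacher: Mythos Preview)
Your reformulation via the deterministic closure and the canonical/non-canonical split is legitimate and different from what the paper does; in particular your lower bound via $|A|\sim\textbf{Bin}[\binom{n-r+2}{2},p]$ is clean and correct, and matches the paper's (implicit) lower bound coming from $|\textrm{D}_u(1)|\sim np$.

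The gap is entirely in your treatment of $|B|$. The certificate first-moment sketch in your second paragraph is not a proof: the assertion that the number of certificate shapes on $V$ auxiliary vertices is ``a factor depending only on $r$'' is false (rooted trees on $V$ internal edges proliferate with $V$, so the combinatorial factor is of order $C^{V}$, not $O_r(1)$); the formula $\Pr[\{x,y\}\in B]=O(np^{3})$ is written for $r=3$ only (for general $r$ the first non-canonical level already costs $(np^{2})^{r-2}p$); and the claim that each auxiliary vertex contributes two \emph{new} independent hyperedges needs care once certificates share vertices. None of this is irreparable---since $np^{2}\to 0$ the extra $C^{V}$ can be absorbed---but you have not actually carried out the enumeration, and in your third paragraph you explicitly hand the job back to ``bookkeeping of open $(r-2,2)$-, $(r-2,3)$- and $(r-2,4)$-walks'' from~\cite{korandi16}.

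That bookkeeping is precisely what the paper does, and it is \emph{not} a certificate/first-moment argument. The paper runs the process in rounds, splits into three ranges of $p$, and for the main range $n^{-7/8}\le p=o(n^{-1/2}\log^{\alpha(3-r)}n)$ proves by induction over $i\le\log n$ rounds that, with iterated Chernoff bounds, $|\textrm{D}_u(i)|\sim np$, $|\textrm{X}_{uv}(i)|\lesssim\log^{2\alpha}n$, $|\textrm{F}_u(i)|\lesssim\epsilon n e^{-i}+(np)^{11/6}$, etc., for \emph{every} vertex and pair simultaneously; from the decay of $|\textrm{F}_u(i)|$ it then deduces that the process halts within $\log n$ rounds, so every degree stays $(1+o(1))np$ throughout. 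The two smaller-$p$ ranges are handled by shorter versions of the same scheme. Your proposal never performs this induction, so the substantive content of the upper bound---that the cascade dies out before producing more than $o(n^{2}p)$ extra edges---remains unproved.
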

\noindent Theorem~1.1  complements the results
in~\cite{korandi16} when $r=3$ to the case of $p=o( n^{- \frac 12})$.
\begin{corollary}
Suppose that $p=o( n^{- \frac 12})$. The final number of added edges 
in the random triadic process, \textit{w.h.p.}, equals
$ \frac {1}{2}n^{2}p(1\pm o(1))$  when $p=\omega(n^{-2})$.
\end{corollary}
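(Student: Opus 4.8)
The plan is to obtain Corollary~1.2 as the $r=3$ instance of Theorem~1.1. When $r=3$ the exponent $\alpha(3-r)$ appearing in the hypothesis of Theorem~1.1 equals $\alpha\cdot 0=0$, so $\log^{\alpha(3-r)}n=\log^{0}n=1$ and the condition $p=o\bigl(n^{-1/2}\log^{\alpha(3-r)}n\bigr)$ collapses to exactly $p=o(n^{-1/2})$; the side condition $p=\omega(n^{-2})$ and the conclusion that the final number of added edges is \textit{w.h.p.} $\tfrac12 n^2p(1\pm o(1))$ are already those of the corollary. Since Theorem~1.1 is proved for every fixed integer $r\geqslant 3$, in particular for $r=3$, there is nothing further to verify and the corollary follows.

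For concreteness it is worth recording what the general argument delivers in this case, since $r=3$ is the regime connecting to~\cite{korandi16}. At the start of the random triadic process the initial graph $K_{1,n-1}$ has a centre $v_1$ joined to every other vertex, so for any two vertices $u,v\in[n]\setminus\{v_1\}$ the single vertex $v_1$ is already a common neighbour, and hence the pair $\{u,v\}$ is eligible to be added at the first opportunity precisely when $\{u,v,v_1\}$ is an edge of $\mathbb{H}_3(n,p)$. The number of such first-generation pairs is distributed as $\Bin\bigl[\binom{n-1}{2},p\bigr]$, whose mean $\binom{n-1}{2}p\sim\tfrac12 n^2p$ is $\omega(1)$ under $p=\omega(n^{-2})$; a Chernoff bound then shows it is \textit{w.h.p.} $\tfrac12 n^2p(1\pm o(1))$, which gives the lower bound. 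Moreover these pairs are determined by pairwise disjoint triples of $\mathbb{H}_3(n,p)$, so the first-generation graph is distributed exactly as $\mathbb{G}(n-1,p)$ -- the precise sense in which the generated graph ``essentially behaves like $\mathbb{G}(n,p)$''.

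The content of the theorem is the matching upper bound: that all later generations together add only $o(n^2p)$ further edges \textit{w.h.p.} Following the part of the analysis of~\cite{korandi16} valid for $c<\tfrac12$, any pair $\{u,v\}$ added after the first generation must be certified by an open $(1,2)$-walk $uWv$ with $W=\{w\}$ and $w\neq v_1$, and a first-moment count of such certified pairs gives $O\bigl(n^2\cdot n\cdot p^3\bigr)=O\bigl(n^2p\cdot (n^{1/2}p)^2\bigr)=o(n^2p)$ precisely because $p\ll n^{-1/2}$; iterating this estimate, with the numbers of open $(1,2)$-, $(1,3)$- and $(1,4)$-walks controlled at each step by standard concentration inequalities rather than by the differential-equation method, each successive generation shrinks by a further factor of order $np^2=o(1)$, so the cumulative number of edges added outside the first generation is $o(n^2p)$ \textit{w.h.p.} Combining the two bounds yields $\tfrac12 n^2p(1\pm o(1))$. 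The main obstacle lies entirely in Theorem~1.1, on which the corollary rests: namely, making this iteration rigorous, in particular keeping simultaneous control of the three walk counts so that the per-generation error terms telescope; for $r\geqslant 4$ this is exactly where the extra $\log^{\alpha(3-r)}n$ room is consumed, whereas for $r=3$ no such room is needed and the bare hypothesis $p=o(n^{-1/2})$ already suffices.
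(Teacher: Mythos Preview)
Your proposal is correct and takes essentially the same approach as the paper: the corollary is simply the $r=3$ specialization of Theorem~1.1, since $\log^{\alpha(3-r)}n=1$ when $r=3$, and the paper offers no separate argument beyond this. Your second and third paragraphs add an accurate informal sketch of what the $r=3$ case looks like, which the paper does not spell out, but this is supplementary commentary rather than a different proof.
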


Based on the proof of Theorem~1.1, it is natural to have
the threshold probability for the property that the graph generated 
by the process is still connected after removing $\{v_1,\cdots,v_{r-2}\}$
is the threshold probability for the connectivity of $\mathbb{G}(n,p)$. 
\begin{theorem}
Let $r\geqslant 3$ be a fixed integer. The threshold probability  of connectivity for
the generated graph by the random $r$-generalized triadic process 
after removing $\{v_1,\cdots,v_{r-2}\}$ 
 is $ n^{-1}\log n$.
\end{theorem}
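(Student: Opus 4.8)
The plan is to show that, once the vertices $v_1,\cdots,v_{r-2}$ are deleted, the graph produced by the process both \emph{contains} a copy of $\mathbb{G}(n-r+2,p)$ and has \emph{precisely the same isolated vertices} as that copy; the classical connectivity threshold $n^{-1}\log n$ of $\mathbb{G}(n-r+2,p)$ is then inherited immediately. Write $G$ for the generated graph and $G'=G-\{v_1,\cdots,v_{r-2}\}$.

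First I would observe that the addability of a pair only increases as edges are added, so that the generated graph does not depend on the order in which pairs are examined and equals the closure of the starting graph $K_{r-2,n-r+2}$ under the triadic rule relative to $\mathbb{H}_r(n,p)$; in particular every pair that is addable at some point eventually becomes an edge. Next, define the \emph{direct graph} $D$ on $[n]\setminus\{v_1,\cdots,v_{r-2}\}$ by putting $\{u,w\}\in D$ exactly when $\{u,w,v_1,\cdots,v_{r-2}\}\in\mathbb{H}_r(n,p)$. Since $\{u,v_i\}$ and $\{w,v_i\}$ are edges of $K_{r-2,n-r+2}$ for every $i$, the set $Z=\{v_1,\cdots,v_{r-2}\}$ already witnesses the addition of $\{u,w\}$ whenever $\{u,w\}\cup Z\in\mathbb{H}_r(n,p)$, so $D\subseteq G'$; and because distinct $r$-sets belong to $\mathbb{H}_r(n,p)$ independently, $D$ is distributed exactly as $\mathbb{G}(n-r+2,p)$.

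The key step is the claim that a vertex $u\notin\{v_1,\cdots,v_{r-2}\}$ is isolated in $G'$ if and only if it is isolated in $D$. One direction is immediate from $D\subseteq G'$. For the other, assume $u$ is isolated in $D$ and consider, towards a contradiction, the first step of the process at which $u$ gains a neighbour $w\notin\{v_1,\cdots,v_{r-2}\}$: just before that step the only neighbours of $u$ are $v_1,\cdots,v_{r-2}$, so the $(r-2)$-set witnessing the addition of $\{u,w\}$ is forced to equal $\{v_1,\cdots,v_{r-2}\}$, whence $\{u,w,v_1,\cdots,v_{r-2}\}\in\mathbb{H}_r(n,p)$, that is $\{u,w\}\in D$, contradicting the isolation of $u$ in $D$. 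Granting the claim, Theorem~1.3 follows from the Erd\H{o}s--R\'{e}nyi theorem on connectivity of $\mathbb{G}(n,p)$: if $p\geqslant(1+\varepsilon)n^{-1}\log n$ for some fixed $\varepsilon>0$ then $D\sim\mathbb{G}(n-r+2,p)$ is connected \textit{w.h.p.}, hence so is $G'\supseteq D$; if $p\leqslant(1-\varepsilon)n^{-1}\log n$ then a first- and second-moment computation on the number of isolated vertices shows $D$ has an isolated vertex \textit{w.h.p.}, hence so does $G'$, which is therefore disconnected \textit{w.h.p.}

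I expect the only genuinely delicate point --- and the part I would write out in full --- to be the ``if'' half of the isolated-vertex equivalence, where one must exploit the precise shape of the initial graph $K_{r-2,n-r+2}$ to see that the witness responsible for the first new edge at an isolated vertex can only be $\{v_1,\cdots,v_{r-2}\}$ itself; the rest is bookkeeping together with standard facts about $\mathbb{G}(n,p)$. Note that Theorem~1.1 is not actually invoked, although $p=\Theta(n^{-1}\log n)$ does lie within its hypothesis $p=o(n^{-1/2}\log^{\alpha(3-r)}n)$, which is consistent with the remark that this threshold result is ``based on the proof of Theorem~1.1''.
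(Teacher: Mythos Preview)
Your argument is correct and hinges on the same observation as the paper's: the only witness set available to a vertex $u$ that has not yet acquired a neighbour in $[n]\setminus\{v_1,\cdots,v_{r-2}\}$ is $\{v_1,\cdots,v_{r-2}\}$ itself, so its first ``real'' neighbour must already lie in $D=\textbf{G}(1)-\{v_1,\cdots,v_{r-2}\}$.

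The paper packages this a little more strongly. Rather than matching only the isolated vertices of $D$ and $G'$, it observes that $G'$ is contained in the transitive closure of $D$, i.e.\ the connected components of $G'$ and $D$ coincide; hence $G'$ is connected if and only if $D$ is, and the Erd\H{o}s--R\'enyi threshold transfers in one stroke. Your route instead combines $D\subseteq G'$ for the $1$-statement with ``isolated vertices coincide'' for the $0$-statement, which in turn requires the (standard) fact that $\mathbb{G}(n,p)$ with $p\leqslant(1-\varepsilon)n^{-1}\log n$ is disconnected \emph{because} it has isolated vertices. Both arguments are short and use the same kernel idea; the paper's component-preservation statement is marginally cleaner since it yields the equivalence of connectivity directly, whereas yours trades a slightly weaker structural claim for a more explicit appeal to how the Erd\H{o}s--R\'enyi threshold is actually proved.
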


\section{Main Results}

For any fixed integer $r\geqslant 3$ and $p=o( n^{- \frac 12}\log^{\alpha(3-r)} n)$
with a constant $\alpha> \frac{1}{2}$, the proof is simpler than the one in~\cite{korandi16}.
Note that $r$ is fixed, it is only necessary to consider the edges 
added between the vertices in $[n]\backslash\{v_1,\cdots,v_{r-2}\}$.
Since the order in which candidates for edges are considered does not matter,
to analyze the process in rounds is viable. That is, for any two 
distinct vertices $u,v\in [n]\backslash\{v_1,\cdots,v_{r-2}\}$
and $W\subseteq [n]\backslash \{u,v\}$ with size $r-2$, all  $r$-sets $\{u,v\}\cup W$ in these 
 open $(r-2,2)$-walks with the form $uWv$
 are  sampled at the same time, and the corresponding edges
  $\{u,v\}$ are added independently with probability $p$.
We will use standard measure concentration
inequalities to analyze the process according to the values of $p$.

Let $\textbf{G}(0)$ be the complete bipartite graph $K_{r-2,n-r+2}$ on the vertex set $[n]$
with one set of the vertex partition denoted as
 $\{v_1,\cdots,v_{r-2}\}$. For an integer $i\geqslant 1$, let us denote the graph
after the $i$-th round by $\textbf{G}(i)$.
For any two distinct vertices $u,v\in [n]\backslash\{v_1,\cdots,v_{r-2}\}$,
let $\textrm{D}_u(i)$ be the set of neighbors of the vertex $u$,
$\textrm{F}_u(i)$ be the set of open $(r-2,2)$-walks at the vertex $u$,
$\textrm{Y}_{{uv}}(i)$ be the set of open $(r-2,3)$-walks
from the vertex $u$ to the vertex $v$, $\textrm{X}_{uv}(i)$ and 
$\textrm{Z}_{uv}(i)$ be the set of  common neighbors
and open $(r-2,4)$-walks in $\textbf{G}(i)-\{v_1,\cdots,v_{r-2}\}$, respectively.

\begin{lemma} {\rm{(Chernoff bound~\cite{cher52})}}
Let  $X\sim\textbf{Bin}[n,p]$. Then, $\mathbb{P}[|X-np|>t]<2\exp\Bigl[- \frac{t^2}{3np}\Bigr]$
for any $0< t\leqslant np$ and $\mathbb{P}[X>np+t]<\exp\Bigl[- \frac{t^2}{2(np+t/3)}\Bigr]$ 
for any $t>0$.
\end{lemma}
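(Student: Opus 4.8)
The plan is to derive both inequalities from the exponential moment (Bernstein--Chernoff) method, using that $X=\sum_{i=1}^{n}X_i$ is a sum of $n$ independent Bernoulli$(p)$ indicators. First I would treat the one-sided bound. Fix $t>0$; for every $\lambda>0$, Markov's inequality applied to $e^{\lambda X}$ gives
\[
\mathbb{P}[X>np+t]\leqslant e^{-\lambda(np+t)}\,\mathbb{E}\bigl[e^{\lambda X}\bigr]=e^{-\lambda(np+t)}\bigl(1-p+pe^{\lambda}\bigr)^{n}\leqslant\exp\bigl(np(e^{\lambda}-1)-\lambda(np+t)\bigr),
\]
where the last step uses $1-p+pe^{\lambda}\leqslant\exp\bigl(p(e^{\lambda}-1)\bigr)$. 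Minimising the exponent over $\lambda>0$ via the choice $\lambda=\log\bigl(1+t/(np)\bigr)$ yields $\mathbb{P}[X>np+t]\leqslant\exp\bigl(-np\,\varphi(t/(np))\bigr)$ with $\varphi(x)=(1+x)\log(1+x)-x$.

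The second step replaces $\varphi$ by the rational lower bound in the statement. The elementary inequality needed is $\varphi(x)\geqslant x^{2}/\bigl(2(1+x/3)\bigr)$ for all $x\geqslant0$; after multiplying through by $2(1+x/3)$ it becomes a function that vanishes together with its first derivative at $x=0$ and has nonnegative second derivative (the latter reducing to the standard bound $\log(1+x)\geqslant x/(1+x)$), so it is nonnegative on $[0,\infty)$. Taking $x=t/(np)$ turns the exponent into $-t^{2}/\bigl(2(np+t/3)\bigr)$, which is the second estimate of the lemma.

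For the two-sided bound I would run the analogous computation for the lower tail --- either directly with $\lambda<0$ or by applying the upper-tail estimate to $n-X\sim\textbf{Bin}[n,1-p]$ --- to obtain $\mathbb{P}[X<np-t]\leqslant\exp\bigl(-t^{2}/(2np)\bigr)$. When $0<t\leqslant np$ we have $2(np+t/3)\leqslant\tfrac{8}{3}np<3np$ and also $2np<3np$, so each of the two one-sided probabilities is at most $\exp\bigl(-t^{2}/(3np)\bigr)$; a union bound over the events $\{X>np+t\}$ and $\{X<np-t\}$ then gives $\mathbb{P}[|X-np|>t]<2\exp\bigl(-t^{2}/(3np)\bigr)$.

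The only step with any content is the scalar inequality $\varphi(x)\geqslant x^{2}/(2(1+x/3))$; everything else is bookkeeping with the moment generating function. As this is a classical statement, it would be equally acceptable to invoke it directly from~\cite{cher52}, but the self-contained derivation above is short enough to spell out.
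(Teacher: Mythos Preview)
Your derivation is correct and follows the standard Bernstein--Chernoff route: exponential Markov, optimisation in $\lambda$, the scalar inequality $\varphi(x)\geqslant x^{2}/(2(1+x/3))$, the analogous lower-tail bound, and a union bound. The paper itself does not prove this lemma at all; it is merely quoted as a classical fact with a citation to~\cite{cher52}, so there is no in-paper argument to compare against. Your self-contained proof is a fine substitute for the bare citation.
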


\begin{proof}[Proof of Theorem~1.1]
For any two distinct vertices $u,v\in [n]/\{v_1,\cdots,v_{r-2}\}$, after round one,
the random graph $\textbf{G}(1)$ is the result of adding all edges $\{u,v\}$ with probability $p$,
 which implies that $\textbf{G}(1)-\{v_1,\cdots,v_{r-2}\}$ 
 is approximately equal to $\mathbb{G}(n,p)$
because $r$ is a fixed integer. 

We make no attempt to optimize all these constants in the following proof.

\vskip 0.2cm
\noindent{\bf{Case 1}}.\,
Consider the case when $p\geqslant n^{- \frac 78}$ and
$p=o( n^{- \frac 12}\log^{\alpha(3-r)} n)$
with a constant $\alpha> \frac 12$.

\vskip 0.2cm
\noindent{\bf Claim A}.\, For any given arbitrarily small constant $\epsilon>0$, 
with probability at least $1- \frac 2n$, we have $|\textrm{D}_u(1)|\sim np$,  
$|\textrm{F}_u(1)|<\epsilon n$, $|\textrm{Y}_{{uv}}(1)|< 
\epsilon np\log^{2\alpha(r-2)}n$, $|\textrm{X}_{uv}(1)|< 3\log n$
and $|\textrm{Z}_{uv}(1)|< \epsilon n\log^{2\alpha(r-2)}n$ hold for 
any two distinct vertices $u,v\in [n]\backslash\{v_1,\cdots,v_{r-2}\}$,
respectively. 

\begin{proof}[Proof of Claim A]\, For any vertex $u\in [n]/\{v_1,\cdots,v_{r-2}\}$,
we have $|\textrm{D}_u(1)|$ is a random variable  with 
$|\textrm{D}_u(1)|\sim\textbf{Bin}[n-r+1,p]$ and $\mathbb{E}[|\textrm{D}_u(1)|]\sim np$
because $r$ is a fixed integer. Given any arbitrarily small 
constant $\epsilon>0$, by Lemma 2.1 with $t= \epsilon np$, 
note that $p\geqslant n^{- \frac 78}$, then we have that 
\begin{equation*}
\mathbb{P}\Bigl[\bigl||\textrm{D}_u(1)|-np\bigr|>t\Bigr]
<2\exp\Bigl[-\frac{t^2}{3np}\Bigr]< \frac{1}{n^2}.
\end{equation*}
Taking a union bound of the above equation for all vertices,
 with probability at most $ \frac 1n$, $|\textrm{D}_u(1)|\sim np$ 
 does not hold for some vertex $u\in [n]\backslash\{v_1,\cdots,v_{r-2}\}$. 
Furthermore, for any two distinct vertices $u,v\in [n]/\{v_1,\cdots,v_{r-2}\}$, 
note that $|\textrm{X}_{uv}(1)|$ is  a random variable  with 
$|\textrm{X}_{uv}(1)|\sim\textbf{Bin}[n-r,p^2]$
and $\mathbb{E}[|\textrm{X}_{uv}(1)|]\sim np^2\ll \log n$. 
Choosing $t= 2 \log n$ in Lemma 2.1,  we also have 
\begin{equation*}
\mathbb{P}\Bigl[|\textrm{X}_{uv}(1)|>np^2+t\Bigr]<\exp\Bigl[- \frac{t^2}{2(np^2+t/3)}\Bigr]
< \frac{1}{n^{3}}.
\end{equation*}
By taking a union bound of the above equation for any pair of vertices,
with probability at most $ \frac 1n$, $|\textrm{X}_{uv}(1)|<3 \log n$ 
does not hold for some two distinct vertices $u,v\in [n]\backslash\{v_1,\cdots,v_{r-2}\}$. 

Assume that the events $|\textrm{D}_u(1)|\sim np$ and $|\textrm{X}_{uv}(1)|<3 \log n$
all hold for any two distinct vertices $u,v\in [n]\backslash\{v_1,\cdots,v_{r-2}\}$ below.
In order to obtain an open $(r-2,2)$-walk with the form  $uWv$
in $\textrm{F}_u(1)$, we first choose a neighbor of the vertex $u$, 
denoted as $w$, with at most $|\textrm{D}_u(1)|$ ways;
 and  choose a neighbor of the vertex $w$, denoted as $v$, 
 with at most $|\textrm{D}_w(1)|$ ways; then we take other $(r-3)$ common neighbors 
 of the vertex $u$ and the vertex $v$ with at most $|\textrm{X}_{uv}(1)|^{r-3}$ ways. 
Hence, for any  vertex $u\in [n]\backslash\{v_1,\cdots,v_{r-2}\}$, 
it follows that 
\begin{align*}
|\textrm{F}_u(1)|& \leqslant |\textrm{D}_u(1)|\cdot |\textrm{D}_w(1)|\cdot |\textrm{X}_{uv}(1)|^{r-3},
\end{align*}
and then $|\textrm{F}_u(1)|< (np)^2(3\log n)^{r-3}<\epsilon n$ 
because $p=o(n^{- \frac {1}{2}}\log^{\alpha(3-r)}n)$ with a constant $\alpha> \frac 12$.
Similarly, in order to generate an open $(r-2,3)$-walk with the form $uWw'v$,
we first choose a neighbor of the vertex $v$, denoted as $w'$, with at most  $|\textrm{D}_v(1)|$
ways; then we take $(r-2)$ common neighbors of the vertex $u$ and the vertex $w'$
with at most $|\textrm{X}_{uw'}(1)|^{r-2}$ ways. It  also follows that, 
for any two distinct vertices $u,v\in [n]\backslash\{v_1,\cdots,v_{r-2}\}$,
\begin{align*}
|\textrm{Y}_{{uv}}(1)|\leqslant |\textrm{D}_v(1)|\cdot |\textrm{X}_{uw'}(1)|^{r-2}.
\end{align*}
Thus, $|\textrm{Y}_{{uv}}(1)|< (np)(3\log n)^{r-2}<\epsilon np\log^{2\alpha(r-2)}n$ 
when $r\geqslant 3$ and $\alpha> \frac 12$. Finally, we also have
\begin{align*}
|\textrm{Z}_{uv}(1)| \leqslant |\textrm{F}_u(1)|\cdot |\textrm{X}_{vw}(1)|^{r-2}<\epsilon n(3\log n)^{r-2}<\epsilon n\log^{2\alpha(r-2)}n,
\end{align*}
where $w\in [n]\setminus \{u,v\}$ is a vertex of an open $(r-2,2)$-walk $uWw$ in $\textrm{F}_u(1)$.

Hence, above all, with probability at least $1- \frac 2n$, all these equations in Claim A
hold for any two distinct vertices $u,v\in [n]\backslash\{v_1,\cdots,v_{r-2}\}$.
\end{proof}

\vskip 0.2cm
\noindent{\bf Claim B}.\, Let $\ell= \log n$. We have, \textit{w.h.p.},  
$|\textrm{D}_u(i)|\sim np$ and $|\textrm{X}_{uv}(i)|\lesssim \log^{2\alpha} n$
hold for any two distinct vertices
$u,v\in [n]\backslash\{v_1,\cdots,v_{r-2}\}$
when $1\leqslant i\leqslant \ell$.

\begin{proof}[Proof of Claim B]\,
 In fact, we will show a stronger result, that is, given
 any arbitrarily small constant $\epsilon>0$, for any $1\leqslant  i\leqslant \ell$,
   with probability at least $(1- \frac 2n)^{i}$, the following inequalities
 \begin{align}
|\textrm{D}_u(j)|&\lesssim \Bigl[1+\bigl(1+ \exp[{-1}]+\cdots +\exp[{1-{j}}]\bigr)\epsilon
+ jn^{ \frac 56}p^{ \frac{11}{6}}+ j(np)^{- \frac 13}\Bigr]np,\\
|\textrm{F}_u(j)|&\lesssim \epsilon n \exp[{-j}]+(np)^{ \frac {11}{6}},\\
|\textrm{Y}_{{uv}}(j)|&\lesssim \epsilon np\log^{2\alpha(r-2)} n,\\
|\textrm{X}_{uv}(j)|&\lesssim \log^{2\alpha} n,\\
|\textrm{Z}_{uv}(j)|&<\epsilon n\log^{2\alpha(r-2)} n,
\end{align}
 hold in $\mathbf{G}(j)$ for any two distinct vertices 
$u,v\in [n]\backslash\{v_1,\cdots,v_{r-2}\}$ 
 when $1\leqslant j\leqslant i$. In particular, after $\ell$ rounds,
  that is, for $i=\ell$, it follows that, 
with probability at least $(1- \frac{2}{n})^{\log n}\rightarrow 1$,
the inequalities in $(2.1)$-$(2.5)$ all hold in $\mathbf{G}(j)$ for any two distinct vertices
$u,v\in [n]\backslash\{v_1,\cdots,v_{r-2}\}$
when $1\leqslant j\leqslant\ell$.

Now we  prove the above statement by induction on rounds. 
Obviously, these equations all hold when $i=1$ by Claim A. 
Assume that, for any  $1\leqslant i<\ell$, with probability at least $(1- \frac 2n)^{i}$, 
the  equations in~(2.1)-(2.5)  hold in $\mathbf{G}(j)$ 
when $ 1\leqslant j\leqslant i$.
The following discussions are on the condition that
 the equations in~(2.1)-(2.5) all hold in $\mathbf{G}(j)$ 
when $ 1\leqslant j\leqslant i$.

In the $(i+1)$-th round, the number of new edges that touch the vertex $u$, that is to be
$|\textrm{D}_u(i+1)|-|\textrm{D}_u(i)|$,  is a random variable  with  
$|\textrm{D}_u(i+1)|-|\textrm{D}_u(i)|\sim\textbf{Bin}[|\textrm{F}_u(i)|,p]$. 
Hence, as the equation shown in~(2.2), choosing $t=(np)^{ \frac 23}$ in  Lemma~2.1,
 we have
\begin{align}
&\mathbb{P}\Bigl[|\textrm{D}_u(i+1)|-|\textrm{D}_u(i)|>|\textrm{F}_u(i)| p
+ (np)^{ \frac 23}\Bigr]\notag\\
&<\exp\Biggl[- \frac{(np)^{ \frac 43}}{2\epsilon np\exp[{-i}]+2(np)^{ \frac {11}{6}}p
+ \frac 23(np)^{ \frac 23}}\Biggr]< \frac{1}{n^2},
\end{align}
where the last inequality is clearly true because
$(np)^{ \frac {11}{6}}p/(np)^{\frac 43}=n^{ \frac 12}p^{ \frac 32}=
o(n^{- \frac 14}\log^{\frac 32\alpha(3-r)}n)$ when
$p=o(n^{- \frac 12}\log^{\alpha(3-r)}n)$.

Similarly, the expectation of $|\textrm{X}_{uv}(i+1)|-|\textrm{X}_{uv}(i)|$
depends on whether the open $(r-2,2)$-walks in $\textrm{Y}_{{uv}}(i)$
and $\textrm{Y}_{{vu}}(i)$ are sampled with probability $p$, and the two
open $(r-2,2)$-walks in $\textrm{Z}_{uv}(i)$ are both sampled with probability $p^2$.
As the equations shown in~(2.3) and~(2.5),
$\mathbb{E}[|\textrm{X}_{uv}(i+1)|-|\textrm{X}_{uv}(i)|]=(|\textrm{Y}_{{uv}}(i)|+
|\textrm{Y}_{{vu}}(i)|)p+|\textrm{Z}_{uv}(i)|p^2<3\epsilon np^2\log^{2\alpha(r-2)} n$
and $|\textrm{X}_{uv}(i+1)|-|\textrm{X}_{uv}(i)|$ is stochastically
dominated by a random variable $X$ with $X\sim\textbf{Bin}[3\epsilon n\log^{2\alpha(r-2)} n,p^2]$.
Choosing $t=2\log^{ \frac {2\alpha+1}{2} }n$ in Lemma~2.1,  we have
\begin{align}
&\mathbb{P}\Bigl[|\textrm{X}_{uv}(i+1)|-|\textrm{X}_{uv}(i)|>3\epsilon np^2\log^{2\alpha(r-2)} n %
+ 2\log^{\frac {2\alpha+1}{2}} n\Bigr]\notag\\
&<\exp\Biggl[- \frac{4\log^{2\alpha+1} n}{6\epsilon np^2\log^{2\alpha(r-2)} n
+ \frac 43\log^{ \frac {2\alpha+1}{2}} n}\Biggr]
< \frac{1}{n^3},
\end{align}
where the last inequality is correct because
$np^2\log^{2\alpha(r-2)} n=o(\log^{2\alpha} n)$ 
when $p=o(n^{- \frac 12}\log^{\alpha(3-r)} n)$.

Taking a union of the equation in (2.6) for all vertices and the equation in
(2.7) for any pair of vertices, with probability at least $1-\frac 2n$,   
we have 
\begin{align}
|\textrm{D}_u(i+1)|-|\textrm{D}_u(i)|&\leqslant |\textrm{F}_u(i)|p+(np)^{ \frac {2}{3}}\leqslant
\epsilon np\exp[{-i}]+(np)^{ \frac {11}{6}}p+(np)^{ \frac 23}
\end{align}
 for any  vertex $u\in [n]\backslash\{v_1,\cdots,v_{r-2}\}$ and
 \begin{align}
|\textrm{X}_{uv}(i+1)|-|\textrm{X}_{uv}(i)|&\leqslant
3\epsilon np^2\log^{2\alpha(r-2)} n + 2\log^{ \frac {2\alpha+1}{2}} n=o(\log^{2\alpha} n)
\end{align}
 for any two distinct vertices $u,v\in [n]\backslash\{v_1,\cdots,v_{r-2}\}$,
where the  inequalities in  (2.8) 
and (2.9) are true because of~(2.2) and 
$p=o(n^{- \frac 12}\log^{\alpha(3-r)}n)$ with a constant $\alpha> \frac 12$, respectively.

As  the equations shown in~(2.1) and~(2.8),
it follows that
\begin{align}
|\textrm{D}_u(i+1)|&\leqslant |\textrm{D}_u(i)| +
\epsilon np\exp[{-i}]+(np)^{ \frac {11}{6}}p+(np)^{ \frac 23}\notag\\
&\leqslant \Bigl[1+\bigl(1+ \exp[{-1}]+\cdots +\exp[{-i}]\bigr)\epsilon
+(i+1)n^{ \frac {5}{6}}p^{ \frac{11}{6}}+(i+1)(np)^{ -\frac 13}\Bigr]np\notag\\
&\sim np,
\end{align}
where the last approximate equality is correct because
$1+ \exp[{-1}]+\cdots +\exp[{-i}]= O(1)$, $(i+1)n^{ \frac {5}{6}}p^{ \frac{11}{6}}=o(1)$
and $(i+1)(np)^{ -\frac 13}=o(1)$ when $1\leqslant i<\log n$,
$p\geqslant n^{- \frac 78}$ and $p=o(n^{- \frac 12}\log^{\alpha(3-r)} n)$.
Note that $|\textrm{D}_u(i+1)|\geqslant |\textrm{D}_u(i)|$,
then we have $|\textrm{D}_u(i+1)|\sim np$ for any $u\in [n]\backslash\{v_1,\cdots,v_{r-2}\}$.
As the equations shown in~(2.4) and~(2.9),  we further have
\begin{align}
|\textrm{X}_{uv}(i+1)|&\leqslant |\textrm{X}_{uv}(i)| +
o(\log^{2\alpha} n)\lesssim \log^{2\alpha} n
\end{align}
for any two distinct vertices $u,v\in [n]\backslash\{v_1,\cdots,v_{r-2}\}$.

Note that the open $(r-2,2)$-walks in $\textrm{F}_u(i+1)$
and $\textrm{Y}_{uv}(i+1)$, and two open $(r-2,2)$-walks in $\textrm{Z}_{uv}(i+1)$
all contain at least one new edge in the $(i+1)$-th round. In the following,
assume that the equations in (2.8)-(2.11) hold for any two distinct vertices 
$u,v\in [n]\backslash\{v_1,\cdots,v_{r-2}\}$, we will bound $\textrm{F}_u(i+1)$, $\textrm{Y}_{uv}(i+1)$
and $\textrm{Z}_{uv}(i+1)$ for any two distinct vertices 
$u,v\in [n]\backslash\{v_1,\cdots,v_{r-2}\}$ below.

There are two different ways that a new open $(r-2,2)$-walk with the form $uWv$
for some vertex $v\in[n]$ can appear in $\textrm{F}_u(i+1)$. One approach to generate a new open
$(r-2,2)$-walk $uWv$ in $\textrm{F}_u(i+1)$ is that  the vertex $u$ is connected to one of its neighbors, 
denoted as $w$, by one new edge in the $(i+1)$-th round with 
 at most $|\textrm{D}_u(i+1)|-|\textrm{D}_u(i)|$ ways;  
and the vertex $w$ is connected to one of its neighbors, denoted as $v$, with  
at most $|\textrm{D}_w(i+1)|$ ways; then we take $(r-3)$ common neighbors 
of the vertex $u$ and the vertex $v$ with at most $|\textrm{X}_{uv}(i+1)|^{r-3}$ ways. 
By the equations in (2.8), (2.10) and (2.11), 
 we obtain a new open $(r-2,2)$-walk $uWv$ with at most
$[\epsilon  np \exp[{-i}]+(np)^{ \frac{11}{6}}p+(np)^{ \frac 23}] 
\cdot np\cdot\log^{2\alpha(r-3)} n$ ways.  Another approach to generate a new open
$(r-2,2)$-walk $uWv$ in $\textrm{F}_u(i+1)$ is that the vertex $u$ is connected to
one of its neighbor, denoted as $w$, with at most $|\textrm{D}_u(i+1)|$ ways; 
and the vertex $w$ is connected to one of its neighbors, denoted as $v$,
by one new edge in the $(i+1)$-th round with at most 
$|\textrm{D}_w(i+1)|-|\textrm{D}_w(i)|$ ways; then we further take 
$(r-3)$ common neighbors of the vertex $u$ and the vertex $v$ 
with at most $|\textrm{X}_{uv}(i+1)|^{r-3}$ ways. 
In this situation, we also obtain a new open $(r-2,2)$-walk $uWv$ in $\textrm{F}_u(i+1)$
with at most  $[\epsilon  np \exp[{-i}]+(np)^{ \frac{11}{6}}p+(np)^{ \frac 23}]
\cdot np\cdot\log^{2\alpha(r-3)} n$ ways. Combining these two approaches, it follows that
\begin{align*}
|\textrm{F}_u(i+1)|&\leqslant 2\epsilon  (np)^{2}\exp[{-i}]\log^{2\alpha(r-3)} n+ 
2(np)^{ \frac{17}{6}}p\log^{2\alpha(r-3)} n+2(np)^{ \frac 53}\log^{2\alpha(r-3)} n\\
&<\epsilon n  \exp[{-i-1}]+(np)^{ \frac{11}{6}}
\end{align*}
for any vertex $u\in [n]\backslash\{v_1,\cdots,v_{r-2}\}$,
where the last inequality is correct because $np^{2}\log^{2\alpha(r-3)} n=o(1)$,
$(np)^{ \frac{17}{6}}p\log^{2\alpha(r-3)} n=o((np)^{ \frac{11}{6}})$
and $(np)^{ \frac 53}\log^{2\alpha(r-3)} n=o((np)^{ \frac{11}{6}})$ when
$p\geqslant n^{- \frac 78}$ and $p=o(n^{- \frac 12}\log^{\alpha(3-r)}n)$.

Likewise, in order to obtain a new open $(r-2,3)$-walk with the form $uWw'v$
 in $\textrm{Y}_{uv}(i+1)$, which is composed of a new open $(r-2,2)$-walk $uWw'$
 and the edge $vw'$ in $\textbf{G}(i+1)$,  the vertex $v$
is  connected to one of its neighbors, denoted as $w'$, with at most
 $|\textrm{D}_v(i+1)|$ ways; and  we take one common neighbor of 
 the vertex $u$ and the vertex $w'$ that is generated in the $(i+1)$-th round 
 with at most $|\textrm{X}_{uw'}(i+1)|-|\textrm{X}_{uw'}(i)|$ ways;
 then we further take $(r-3)$ common neighbors of the vertex $u$ and the vertex $w'$  with at most
$|\textrm{X}_{uw'}(i+1)|^{r-3}$ ways. By the equations in (2.9), (2.10) 
and (2.11), we correspondingly obtain a new open $(r-2,3)$-walk $uWw'v$ 
with  $o(np\cdot\log^{2\alpha(r-2)} n)$ ways. 
Hence, we have
\begin{align*}
|\textrm{Y}_{{uv}}(i+1)|&=  o(np\cdot\log^{2\alpha(r-2)} n)<  
\epsilon  np \log^{2\alpha(r-2)} n
\end{align*}
for any two distinct vertices $u,v\in [n]\backslash\{v_1,\cdots,v_{r-2}\}$.

At last, note that a new open $(r-2,4)$-walk with the form $uW_1w'W_2v$ 
 in $\textrm{Z}_{uv}(i+1)$ is obtained by sticking together two new open
 $(r-2,2)$-walks $uW_1w'$ and $w'W_2v$ in the $(i+1)$-th round at the vertex $w'$,
then we enumerate the ways to generate a  new open $(r-2,4)$-walk with the form $uW_1w'W_2v$ below. 
Firstly, in order to obtain a new open $(r-2,2)$-walks $uW_1w'$, 
it also depends on where the new edges in the $(i+1)$-th round 
appear on  $uW_1w'$. By similar analysis with the above case 
 of $|\textrm{F}_u(i+1)|$, as the equations shown in (2.8), (2.10) and (2.11),
 we have a new open $(r-2,2)$-walk $uW_1w'$ with at most $2[\epsilon  np \exp[{-i}]
+(np)^{ \frac{11}{6}}p+(np)^{ \frac 23}]\cdot np\cdot \log^{2\alpha(r-3)}n$ ways.
Next step, in order to obtain a new open $(r-2,2)$-walk $w'W_2v$ in the $(i+1)$-th
 round, we take one common neighbor of the vertex $w'$ and the vertex $v$ that is generated 
 in the $(i+1)$-th round with at most $|\textrm{X}_{vw'}(i+1)|-|\textrm{X}_{vw'}(i)|$ ways;
 then we take other $(r-3)$ common neighbors of the vertex $w'$ and
 the vertex $v$ with  at most $|\textrm{X}_{vw'}(i+1)|^{r-3}$ ways. 
 By the equations in (2.9) and (2.11), we  further have a new open 
 $(r-2,2)$-walks $w'W_2v$ with at most $o(\log^{2\alpha(r-2)}n)$ ways
in the $(i+1)$-th round. Hence, for any two distinct vertices
$u,v\in [n]\backslash\{v_1,\cdots,v_{r-2}\}$, it follows that,
\begin{align*}
|\textrm{Z}_{uv}(i+1)|&\leqslant 2 \Bigl[\epsilon  np \exp[{-i}]
+(np)^{ \frac{11}{6}}p+(np)^{ \frac 23}\Bigr]
\cdot np\cdot o(\log^{2\alpha(2r-5)} n)\\
&=o(n^2p^2\log^{2\alpha(2r-5)} n)\\
&<\epsilon n \log^{2\alpha(r-2)} n, 
\end{align*}
where the last inequality is correct when $p=o(n^{- \frac 12}\log^{\alpha(3-r)} n)$.

Since the equations in (2.8)-(2.11) all hold for any two distinct vertices
$u,v\in [n]\backslash\{v_1,\cdots,v_{r-2}\}$ with conditional probability at least $1- \frac 2n$,
 we now have the equations in (2.1)-(2.5) hold in the $(i+1)$-th round with 
 conditional probability at least $1- \frac 2n$. Let  $\mathcal{A}_i$ be 
 the event that the equations in~(2.1)-(2.5) 
are true in $\mathbf{G}(j)$ for $ 1\leqslant j\leqslant i$, which holds 
with probability at least $(1- \frac 2n)^i$ by induction,
 and $\overline{\mathcal{A}_i}$ be the complement of the event $\mathcal{A}_i$.
By law of total probability 
$\mathbb{P}[\mathcal{A}_{i+1}]=\mathbb{P}[\mathcal{A}_{i+1}|\mathcal{A}_i]\mathbb{P}[\mathcal{A}_i]+
\mathbb{P}[\mathcal{A}_{i+1}|\overline{\mathcal{A}_i}]\mathbb{P}[\overline{\mathcal{A}_i}]$,  
we have $\mathbb{P}[\mathcal{A}_{i+1}]\geqslant (1- \frac{2}{n})^{i+1}$
to complete the proof of induction.
\end{proof}

 We will finish the proof of Theorem 1.1 in the case of $p\geqslant n^{- \frac 78}$ and 
$p=o( n^{- \frac 12}\log^{\alpha(3-r)} n)$ below. Assume  that all equations in Claim B hold.
For an integer $1\leqslant i< \ell$, let $\textrm{F}(i)$ denote the set of all open $(r-2,2)$-walks
in $\textbf{G}(i)-\{v_1,\cdots,v_{r-2}\}$ and let $\textrm{D}(i+1)$ denote 
the set of new edges in the $(i+1)$-th round, 
then $\textrm{D}(i+1)\sim\textbf {Bin}[|\textrm{F}(i)|,p]$ and $\mathbb{E}[|\textrm{D}(i+1)|]= 
\mathbb{E}[\mathbb{E}[|\textrm{D}(i+1)|||\textrm{F(i)}|]]
=p\mathbb{E}[|\textrm{F}(i)|]$ by the tower property.
At the same time, for any new edge, denoted as $uv$, by Claim B,
the number of $(1,2)$-walks containing the edge $uv$, such as $uvw_1$ and $w_2uv$ 
for some $w_1,w_2\in [n]\backslash\{v_1,\cdots,v_{r-2}\}$ in $\mathbf{G}(i+1)$, 
is at most $|\textrm{D}_u(i+1)|+|\textrm{D}_v(i+1)|\sim 2np$,
and the codegrees of the vertex $u$ and the vertex $w_1$, 
the vertex $v$ and the vertex $w_2$ in the generated graph are
$|\textrm{X}_{uw_1}(i+1)|\lesssim\log^{2\alpha} n$ and 
$|\textrm{X}_{vw_2}(i+1)|\lesssim\log^{2\alpha} n$, 
respectively. Hence,  we have
\begin{align*}
\mathbb{E}[|\textrm{F}(i+1)|\bigl||\textrm{D(i+1)}|]
&\lesssim(2 np\log^{2\alpha(r-3)}n)|\textrm{D}(i+1)|,
\end{align*}
and then
\begin{align*}
\mathbb{E}[|\textrm{F}(i+1)|]&=
\mathbb{E}\bigl[\mathbb{E}[|\textrm{F}(i+1)|\bigl||\textrm{D(i+1)}|]\bigr]\\
&\lesssim(2 np\log^{2\alpha(r-3)}n)\mathbb{E}\bigl[|\textrm{D}(i+1)|\bigr]\\
&=(2 np^2\log^{2\alpha(r-3)}n)\mathbb{E}\bigl[|\textrm{F}(i)|\bigr].
\end{align*}
We recursively have $\mathbb{E}[|\textrm{F}(i+1)|]\lesssim
(2 np^2\log^{2\alpha(r-3)}n)^i\mathbb{E}[|\textrm{F}(1)|]$
for $1\leqslant i< \log n$. Note that $\mathbb{E}[|\textrm{F}(1)|]<\epsilon n^2$
by  $|\textrm{F}_u(1)|< \epsilon n$ for any $u\in [n]\setminus \{v_1^*,\cdots,v_{r-2}^*\}$,
then it follows that $\mathbb{E}[|\textrm{F}(\log n)|]<
n^{\log (2np^2\log^{2\alpha(r-3)}n)+2}=o(1)$ when $p=o(n^{-\frac 12}\log^{\alpha(3-r)}n)$.
By Markov's inequality, we indeed obtain that
$\mathbb{P}[|\textrm{F}(\log n)|>0\,|\,\text{Claim B holds}]=o(1)$
 when $p=o(n^{-\frac 12}\log^{\alpha(3-r)}n)$.  
By law of total probability,
we further have $\mathbb{P}[|\textrm{F}(\log n)|>0]=o(1)$, 
which implies that, \textit{w.h.p.}, the process stops after at most $\log n$ rounds.

As the equation $|\textrm{D}_u(i)|\sim np$ shown in Claim B  for any vertex 
 $u\in [n]\backslash \{v_1,\cdots,v_{r-2}\}$ when 
 $1\leqslant i\leqslant \log n$, \textit{w.h.p.}, the final size of 
 added edges in the random $r$-generalized triadic process
is  equal to $ \frac 12n^{2}p(1+o(1))$.

\vskip 0.2cm
\noindent\textbf{Case 2}.\, Consider the case when $p\in (n^{- \frac {33}{24}},n^{ - \frac 78})$. 

\vskip 0.2cm 
\noindent{\bf Claim C}.\, We have, {\it w.h.p.}, $|\textrm{D}_u(1)|< 2n^{ \frac 18}$,
$|\textrm{F}_u(1)|< 2^{r-1}n^{ \frac 14}\log^{r-3}n$,
$|\textrm{Y}_{{uv}}(1)| <2^{r-1}n^{ \frac 18}\log^{r-2}n$,
$|\textrm{X}_{uv}(1)|< 2\log n$ and $|\textrm{Z}_{uv}(1)|<2^{2r-3}n^{ \frac 14}\log^{2r-5}n$
hold for any two vertices $u,v\in [n]\backslash\{v_1,\cdots,v_{r-2}\}$,
respectively.

\begin{proof}[Proof of Claim C]\, Since $\mathbb{E}[|\textrm{D}_u(1)|]\sim np< n^{ \frac 18}$
and $\mathbb{E}[|\textrm{X}_{uv}(1)|]\sim np^2<n^{- \frac 34}$,
choosing $t=  n^{ \frac 18}$ and $t= \frac 32\log n$ in  Lemma 2.1,
taking a union bound, we also obtain, {\it w.h.p.},
 $|\textrm{D}_u(1)|< 2n^{ \frac 18}$ and $|\textrm{X}_{uv}(1)|< 2\log n$
for any two distinct vertices $u,v\in [n]\backslash\{v_1,\cdots,v_{r-2}\}$, respectively.
Following the same analysis in Claim A,  {\it w.h.p.}, 
we also have all the equations of $|\textrm{F}_u(1)|$, 
$|\textrm{Y}_{{uv}}(1)|$ and $|\textrm{Z}_{uv}(1)|$  in Claim C hold
for any two distinct vertices $u,v\in [n]\backslash\{v_1,\cdots,v_{r-2}\}$.
\end{proof}

We now show that, by a shorter proof than Case 1, \textit{w.h.p.}, no edges are added 
during round three and the number of added edges during round two is
negligible compared to the added one in $\textbf{G}(1)$.

Assume that all equations in Claim C hold. Thus,
we have $\mathbb{E}\bigl[|\textrm{D}_u(2)|-|\textrm{D}_u(1)|\bigr]
=|\textrm{F}_u(1)|p< 2^{r-1}n^{ - \frac 58}\log^{r-3}n=o(1)$
for any vertex $u\in [n]/\{v_1,\cdots,v_{r-2}\}$.
By Lemma~2.1, choosing $t=n^{ \frac{1}{16}}$,
$\mathbb{P}[|\textrm{D}_u(2)|-|\textrm{D}_u(1)|>|\textrm{F}_u(1)|p
+ n^{ \frac{1}{16}}]< \frac{1}{n^2}$.
Taking a union of the above equation for all vertices,  
  {\it w.h.p.}, $|\textrm{D}_u(2)|-|\textrm{D}_u(1)|< 2n^{ \frac{1}{16}}$
  holds for any vertex $u\in [n]/\{v_1,\cdots,v_{r-2}\}$,
which implies that, \textit{w.h.p.}, no vertex receives more than $2n^{  \frac{1}{16}}$ new edges 
during round two and $|\textrm{D}_u(2)|<3n^{ \frac 18}$ for any 
 $u\in [n]\backslash\{v_1,\cdots,v_{r-2}\}$.
We also have $\mathbb{P}[\sum_{u}
(|\textrm{D}_u(2)|-|\textrm{D}_u(1)|)>n^{  \frac{7}{16}}]\leqslant
\sum_{u}\mathbb{E}[|\textrm{D}_u(2)|-|\textrm{D}_u(1)|]/n^{  \frac{7}{16}}
<2^{r-1}n^{- \frac{1}{16}}\log^{r-3}n=o(1)$
by Markov's inequality, which implies that, {\it w.h.p.},
at most $n^{  \frac{7}{16}}$ vertices receive at least one
edge during round two. Overall, we have that, \textit{w.h.p.}, 
at most $2n^{ \frac{1}{2}}$ new edges are added during round two.

By $|\textrm{Y}_{{uv}}(1)|+|\textrm{Y}_{{vu}}(1)|< 2^{r}n^{ \frac 18}\log^{r-2} n$
and $|\textrm{Z}_{uv}(1)|<2^{2r-3}n^{ \frac 14}\log^{2r-5} n$ as the equations shown in Claim C,
it follows that $\mathbb{E}[|\textrm{X}_{uv}(2)|-|\textrm{X}_{uv}(1)|]<
2^{r+1}n^{  \frac 18}p\log^{r-2} n$ because 
$2^{2r-3}n^{ \frac 14}p^2\log^{2r-5} n\ll$ $2^{r}n^{  \frac 18}p\log^{r-2} n$
when $p<n^{- \frac 78}$, and 
$|\textrm{X}_{uv}(2)|-|\textrm{X}_{uv}(1)|$ is stochastically dominated by
a random variable $X$ with
$X\sim\textbf{Bin}[2^{r+1}n^{  \frac 18}\log^{r-2} n,p]$. Choosing $t=2\log n$ in Lemma~2.1 and
taking a union bound, by $|\textrm{X}_{uv}(1)|<2\log n$ in Claim C, we also obtain that, 
\textit{w.h.p.}, $|\textrm{X}_{uv}(2)|<4\log n$ 
for any two distinct
vertices $u,v\in [n]\backslash\{v_1,\cdots,v_{r-2}\}$.

By law of total probability again, 
we finally have, \textit{w.h.p.}, there are at most $2n^{ \frac 12}\cdot 
(3n^{ \frac 18})\cdot(4\log n)^{r-3}\leqslant {6}^{r-2}n^{ \frac 58}\log^{r-3} n$
open $(r-2,2)$-walks in $\textbf{G}(2)$. Once again,
 the expected number of edges added during round three is at most
$6^{r-2}n^{ - \frac 14}\log^{r-3}n=o(1)$ when $p<n^{- \frac 78}$, and \textit{w.h.p.}, 
we have no edges are added during round three.
Note that
$2n^{ \frac 12}\ll \frac{1}{2}n^2 p$ when $p> n^{- \frac{33}{24}}$, 
which implies that the number of added edges during round two is negligible compared
to the added one in $\textbf{G}(1)$.

\vskip 0.2cm
\noindent\textbf{Case 3}.\, Consider the case when $p\leqslant n^{- \frac {33}{24}}$.
\vskip 0.2cm
By Lemma~2.1, \textit{w.h.p.}, the number of added edges in $\textbf{G}(1)$ 
is at most $2n^{ \frac {5}{8}}$ and $|\textrm{X}_{uv}(1)|< 4\log n$ 
for any two distinct vertices $u,v\in [n]/\{v_1,\cdots,v_{r-2}\}$.
Hence, \textit{w.h.p.}, the number of open $(r-2,2)$-walks in $\textbf{G}(1)$ is at most
$4^{r-2}n^{ \frac{5}{4}}\log^{r-3} n$. Under this condition, the
expected number of added edges during round two is 
at most $4^{r-2}n^{- \frac 18}\log^{r-3} n$ when $p\leqslant n^{- \frac {33}{24}}$. By law of total probability
and Markov's inequality again,
\textit{w.h.p.}, no edges are added 
during round two.

\vskip 0.2cm
Combining these three cases, after removing $\{v_1,\cdots,v_{r-2}\}$, \textit{w.h.p.},
the generated graph in the random $r$-generalized triadic process
essentially behaves like $\mathbb{G}(n,p)$ when $p=o(n^{- \frac 12}\log^{\alpha(3-r)} n)$ 
with a constant $\alpha> \frac 12$, and
the final number of added edges in the process approximately equals  $ \frac 12n^2p$
when $p=\omega(n^{-2})$ by Lemma~2.1 because $n^2p\rightarrow\infty$.
\end{proof}

It is easy to obtain that
$p= n^{-1}\log n$ is also the threshold of connectivity
for the random $r$-generalized triadic process.

\begin{proof}[Proof of Theorem~1.3]
Assuming removal of $\{v_1,\cdots,v_{r-2}\}$,
during the further evolution of the process, let
$G'$ be any graph generated after round one. Then, the graph $G'$
is a subgraph of the transitive closure of $\textbf{G}(1)-\{v_1,\cdots,v_{r-2}\}$,
and $G'$ is connected if and only if $\textbf{G}(1)-\{v_1,\cdots,v_{r-2}\}$
is connected. Since $\textbf{G}(1)-\{v_1,\cdots,v_{r-2}\}$
behaves like $\mathbb{G}(n,p)$, it implies that the threshold for connectivity of
the $r$-generalized triadic process is $ n^{-1}\log n$.
\end{proof}



\end{document}